
\documentclass[letterpaper, 10 pt, conference]{ieeeconf}  


\usepackage{amsmath}
\usepackage{amssymb}
\usepackage{mathrsfs}
\usepackage{bbm}
\usepackage{mathtools}
\usepackage{algorithm}
\usepackage{graphicx}
\usepackage{units}
\usepackage{tikz}



\usepackage{xcolor}



\usepackage{xifthen}

\renewcommand{\t}{^{\mbox{\tiny \sf T}}}
\newcommand{\m}{^{\mbox{\scriptsize{-}}}}

\newcommand{\inv}{^{- 1}}

\DeclareMathOperator{\E}{E}
\DeclareMathOperator{\tr}{tr}

\DeclareMathOperator{\diag}{diag}

\DeclareMathOperator{\ncdf}{cdfn}

\let\Pr\relax
\newcommand{\Pr}{\mathbb{P}}


\renewcommand{\Re}{\mathbb{R}}

\renewcommand{\E}{\mathbb{E}}

\newcommand{\F}{\mathscr{F}}
\newcommand{\Ncal}{\mathcal{N}}

\newcommand{\isomorph}{\cong}





\renewcommand{\Re}{\mathbb{R}}

\DeclareSymbolFont{matha}{OML}{txmi}{m}{it}
\DeclareMathSymbol{\varv}{\mathord}{matha}{118} 
\DeclareMathSymbol{\varw}{\mathord}{matha}{119} 

\newtheorem{problem}{Problem}
\newtheorem{remark}{Remark}
\newtheorem{theorem}{Theorem}[section]

\graphicspath{ {./img/} }

\IEEEoverridecommandlockouts                              

\overrideIEEEmargins                                      

\title{\LARGE \bf
Chance Constrained Covariance Control for Linear\\Stochastic Systems With Output Feedback
}

\author{Jack Ridderhof \and Kazuhide Okamoto \and Panagiotis Tsiotras%
\thanks{J. Ridderhof is a PhD student with the School of Aerospace Engineering, Georgia Institute of Technology, Atlanta, GA, 30332-0150, USA. Email: jridderhof3@gatech.edu}%
\thanks{K. Okamoto is a PhD student with the School of Aerospace Engineering, Georgia Institute of Technology, Atlanta, GA, 30332-0150 USA. Email: kazuhide@gatech.edu}%
\thanks{P. Tsiotras is the Andrew and Lewis Chair Professor with the D. Guggenheim School of Aerospace Engineering, and the Institute for Robotics and Intelligent Machines, Georgia Institute of Technology, Atlanta, GA 30332-0150, USA. Email: tsiotras@gatech.edu}
}

\begin{document}

\maketitle
\thispagestyle{empty}
\pagestyle{empty}

\begin{abstract}

We consider the problem of steering, via output feedback, the state distribution of a discrete-time, linear stochastic system from an initial Gaussian distribution to a terminal Gaussian distribution with prescribed mean and maximum covariance, subject to probabilistic path constraints on the state.
The filtered state is obtained via a Kalman filter, and the problem is formulated as a deterministic convex program in terms of the distribution of the filtered state.
We observe that, in the presence of constraints on the state covariance, and in contrast to classical Linear Quadratic Gaussian (LQG) control, the optimal feedback control depends on both the process noise and the observation model.
The effectiveness of the proposed approach is verified using a numerical example. 

\end{abstract}

\section{INTRODUCTION}
The problem of covariance steering is a stochastic optimal control problem aiming to design a controller that steers the state covariance of a stochastic system to a target terminal value, while minimizing the expectation of a quadratic function of the state and the input. 
In this work, we focus on a discrete-time linear time-varying stochastic system with partially observable state, a given Gaussian initial state distribution, and an independent and identically distributed (i.i.d.) standard Gaussian additive diffusion to the dynamics and the measurement. 

The infinite horizon covariance control problem for linear time invariant systems has been researched since the late 80's.
In~\cite{hotz1985covariance,hotz1987covariance} the authors investigated the state-feedback gains that \emph{assign} a state covariance value to the system, i.e., the system state covariance converges asymptotically to the assigned value.
%
The finite horizon case has only recently gained attention~\cite{Chen2016a,bakolas2016optimalCDC,Bakolas2018,beghi1996relative,Goldshtein2017,halder2016finite}. 
Chance constraints, which are probabilistic constraints that impose a maximum probability of constraint violation, were first introduced to the covariance control problem in~\cite{Okamoto2018}.
The latter work draws connections between covariance control and a large class of stochastic control problems for which chance constraints are utilized in order to guarantee performance under uncertainty~\cite{blackmore2009convex, geletu2013advances}, such as stochastic model predictive control (SMPC)~\cite{farina2016stochastic,mesbah2016stochastic} and vehicle path planning in belief space~\cite{blackmore2011chance,Okamoto2018b}.

The majority of covariance control research only considers controlling the state covariance.
However, as discussed in~\cite{Okamoto2018}, when state chance constraints exist, the mean and the state covariance are coupled.
The authors in~\cite{Okamoto2018} introduced the first covariance steering controller that simultaneously deals with the mean and the covariance dynamics such that the resulting trajectories satisfy the state chance constraints. 
The approach was further modified to be computationally more efficient in~\cite{Okamoto2018b}, which was eventually extended to deal with input hard constraints~\cite{okamoto2019input} and nonlinear dynamics~\cite{ridderhof2019nonlinear}.
Furthermore, covariance control theory was applied to autonomous vehicle control in~\cite{chohan2019robust} and spacecraft control in~\cite{ridderhof2018uncertainty,ridderhof2019minimumFuel,ridderhof2020lowthrust}.
Finally, in~\cite{okamoto2019stochastic}, the authors applied covariance control theory to SMPC for linear time-invariant systems under unbounded additive disturbance.

The above-mentioned research on covariance control assumes full state feedback. 
This paper, in contrast, is concerned with covariance steering for the case when the state is only indirectly accessible via noisy measurements.

The problem of output-feedback covariance steering has been visited in~\cite{Chen2015,bakolas2017covariance}, where the problem had no constraints other than a terminal boundary constraint. 
Thus, these works only dealt with the control of the state covariance and did not consider mean dynamics. 
The proposed approach deals with state chance constraints 
and simultaneously steers the mean and the covariance of the system state, and thus, can be applied to more realistic scenarios. 

A similar problem setup as the one addressed in this paper has also been visited from the SMPC community~\cite{farina2015approach}, where an output feedback controller was designed to deal with chance constraints. 
Although the approach in~\cite{farina2015approach} successfully computes control commands that satisfy all the constraints, the control policy suffers from conservativeness due to the convex relaxation of the covariance dynamics. 
The approach in our work extends the covariance steering controller in~\cite{Okamoto2018b}, which allows a direct assessment of the covariance at each time step and eliminates the need to conduct conservative convex relaxations of the covariance dynamics. 

The main contribution of this work is the development of a novel covariance steering control policy for linear systems with Gaussian process and measurement noise. 
The proposed approach is a nontrivial extension of the full-state feedback covariance control policy proposed in~\cite{Okamoto2018b}, which allows one to directly assess the value of the covariance at each time step, while converting the original stochastic control problem to a deterministic convex programming problem. 
We observe that, as a direct consequence of the constraints on the state covariance, and in contrast to the classical LQG solution~\cite{Tse1971}, the optimal feedback control depends on both the process noise and the observation model.

\subsection*{Notation}

For a sequence $x = (x_1, \ldots, x_N) = (x_k)_{k = 1}^N$, we use the shorthand $(x_k)$ to refer to the sequence and write $x_k$ to refer to an element of that sequence. We write $\sigma(z)$ to denote the $\sigma$-algebra generated by the random variable $z$. For a symmetric matrix $A$, we write $A > 0$ $(\geq 0)$ if $A$ is positive (semi-)definite.

\section{PROBLEM STATEMENT}\label{sec:ProblemStatement}

Consider the stochastic discrete-time linear system given by
\begin{equation} \label{eq:state_process}
	x_{k + 1} = A_k x_k + B_k u_k + G_k w_k,
\end{equation}
for $k = 0, 1, \dots, N - 1$, where $x_k \in \Re ^ {n_x}$ and $u_k \in \Re ^ {n_u}$ are the state and control, and $A_k \in \Re^{n_x\times n_x}$, $B_k \in \Re^{n_x \times n_u}$, and $G_k \in \Re^{n_x\times n_x}$ are system matrices.
Increments of the disturbance process $w_k \in \Re ^{n_x}$ are i.i.d. standard Gaussian random vectors. The state is measured through the observation process
\begin{equation} \label{eq:observation_process}
	y_k = C_k x_k + D_k v_k,
\end{equation}
where $y_k \in \Re ^{n_y}$ is the measurement and  $v_k \in \Re ^{n_y}$ is measurement noise, and $C_k \in \Re^{n_y \times n_x}$ and $D_k \in \Re^{n_y \times n_y}$ are given. 
Increments of the measurement noise $v_k$ are i.i.d. standard Gaussian random vectors.
Also, and in order to simplify the filtering equations, we assume that 
the matrix $D_k$ is invertible.
The case when $D_k$ is rank-deficient can be treated using well-known approaches~\cite{FairmanLukKF1985}.
Before the first measurement is taken, we assume that we will be provided with a state estimate $\hat{x}_{0\m}$ with estimation error $\tilde{x}_{0\m} = x_0 - \hat{x}_{0\m}$.
We assume that $\hat{x}_{0\m}$ and $\tilde{x}_{0\m}$ are independent random vectors with known distributions given as
\begin{equation}
	\hat{x}_{0\m} \sim \Ncal(\bar{x}_0, \hat{P}_{0\m}), \quad \tilde{x}_{0\m} \sim \Ncal(0, \tilde{P}_{0\m}),
\end{equation}
where the positive semi-definite matrices $\tilde{P}_{0\m}$, $\hat{P}_{0\m}$ and the vector $\bar{x}_0$ are all fixed and known.
That is, we do not assume to know the initial state estimate when designing the control law, but we know its distribution.
This allows for the control law to be designed before all measurements are collected.
For example, in the case when we will be provided  with the exact value of the state, then $\tilde{x}_{0\m} = 0$, \mbox{$\hat{x}_{0\m} = x_0$}, and $\tilde{P}_{0\m} = 0$.
On the other hand, if we will not be provided with any new information about the state before step $k = 0$, then $\hat{x}_{0\m} = \bar{x}_0$ and $\hat{P}_{0\m} = 0$.
Finally, we assume that $\hat{x}_{0\m}$, $\tilde{x}_{0\m}$, $(w_k)$, and $(v_k)$ are independent.

Define the filtration $(\F_k)_{k = -1}^N$ by $\F_{- 1} = \sigma(\hat{x}_{0\m})$ and $\F_k = \sigma(\hat{x}_{0\m}, y_i : 0 \leq i \leq k)$ for $0 \leq k \leq N$.
This filtration represents the information that can be used to estimate the state and determine the control action, in the sense that the estimated state and the control at step $k$ are both $\F_k$-measurable random vectors.
The initial $\sigma$-algebra $\F_{- 1}$ is defined
for logical consistency, since the initial state estimate is known before any measurements are taken.

Let $\bar{x}_k = \E(x_k)$ be the mean state, and define the estimated (filtered) state as $\hat{x}_k = \E(x_k \vert \F_k)$ and the estimation error as $\tilde{x}_k = x_k - \hat{x}_k$. The estimated state has mean
\begin{equation}
	\E(\hat{x}_k) = \E( \E(x_k \vert \F_k) ) = \E (x_k) = \bar{x}_k,
\end{equation}
and hence the estimation error has zero mean, that is, $\E(\tilde{x}_k) = 0$ for all $0 \le k \le N$.
Define the state, estimated state, and estimation error covariances as
\begin{align}
       {P}_k &= \E [({x}_k - \bar{x}_k)({x}_k - \bar{x}_k) \t], \\
	\hat{P}_k &= \E [(\hat{x}_k - \bar{x}_k)(\hat{x}_k - \bar{x}_k) \t], \\
	 \tilde{P}_k &=  \E(\tilde{x}_k \tilde{x}_k \t ) =
	 \E [(\hat{x}_k - x_k)(\hat{x}_k - x_k) \t].
\end{align}
The estimated state is uncorrelated with the estimation error, since
\begin{align}
	\E(\hat{x}_k \tilde{x}_k\t) &= \E [\hat{x}_k (x_k - \hat{x}_k)\t] \nonumber\\
	&= \E [ \E [\hat{x}_k (x_k - \hat{x}_k)\t \vert \F_k]] \nonumber\\
	&= \E [ \hat{x}_k  \E [x_k \t - \hat{x}_k \t \vert \F_k]] \nonumber\\
	&= \E [ \hat{x}_k  (\E [x_k \t \vert \F_k]  - \hat{x}_k \t)] = 0,
\end{align}
and from this expression it can be shown that the state covariance satisfies $P_k = \hat{P}_k + \tilde{P}_k$. 
Define the prior estimated state and prior estimation error as $\hat{x}_{k\m} = \E(x_k \vert \F_{k - 1})$ and $\tilde{x}_{k\m} = x_k - \hat{x}_{k\m}$, respectively,
with corresponding covariances $\hat{P}_{k\m}$ and $\tilde{P}_{k\m}$ as above.
It follows that the initial state is distributed as
\begin{equation} \label{eq:initial_state_dist}
	x_0 \sim \Ncal ( \bar{x}_0, P_0),
\end{equation}
where $P_0 = \hat{P}_{0\m} + \tilde{P}_{0\m}$.
We require that
\begin{equation} \label{eq:chance_constraint}
	\Pr (x_k \notin \chi) \leq p_{\text{fail}}, \quad k = 0, 1, \dots, N,
\end{equation}
where $0 < p_{\text{fail}} < 0.5$ is fixed, and where
\begin{equation} \label{eq:chance_constrained_region}
	\chi = \bigcap_{j = 1}^{N_\chi} \{ x : \alpha_j \t x \leq \beta_j \} \subset \Re ^{n_x},
\end{equation}
where $\alpha_j \in \Re^{n_x}$ and $\beta_j \in \Re$.
Here the compliment of $\chi$ denotes a forbidden region in the state space, and thus we 
we constrain the probability that the state is not in $\chi$ to be no more \mbox{than $p_{\text{fail}}$}~\cite{blackmore2009convex,blackmore2011chance}.
Constraints of the form (\ref{eq:chance_constraint}) are often referred to as \textit{chance constraints}, and, likewise, optimization subject these constraints is referred to as chance constrained optimization~\cite{mesbah2016stochastic}.
%


Finally, we assume for the remainder of this paper that the control input $u_k$ at each step is an affine function of the measurement data.
We say that a control sequence $(u_k)$ is \textit{admissible} if it satisfies this property at every step.
This assumption is made to ensure that if $x_k$ is Gaussian, then the state $x_{k + 1}$ will also be Gaussian.
It follows that, since the state is initially Gaussian distributed, the state will be Gaussian distributed over the entire problem horizon, even in the presence of the chance constraints.


This paper is concerned with the following stochastic optimal control problem.

\begin{problem} \label{prob:original}
	Find the admissible control sequence $u = (u_k)_{k=0}^{N-1}$ such that the chance constraints (\ref{eq:chance_constraint}) are satisfied; the state at step $N$ is distributed according to $\Ncal (\bar{x}_f, P_N)$ for $P_N = \hat{P}_N + \tilde{P}_N \leq P_f$, where $\bar{x}_f$ and $P_f$ are given; and minimizes the cost functional
	\begin{equation} \label{eq:orig_objective}
		J(u) = \E \left( \sum_{k = 0}^{N - 1} x_k \t Q_k x_k + u_k \t R_k u_k \right), 
	\end{equation}
	for a given sequences of matrices $(Q_k \geq 0)$ and $(R_k > 0)$.
\end{problem}


\begin{remark}
  For simplicity, we do not consider chance constraints on the control. However, the method developed in this work may be easily extended to include chance constraints on the control. See, for instance,~\cite{ridderhof2019minimumFuel,okamoto2019stochastic}.
\end{remark}

\subsection{Separation of the Observation and Control Problems}

Since the system is linear and the state is Gaussian distributed, the estimated state may be obtained by the Kalman filter. 
That is, the filtered state satisfies~\cite{BrysonHo}
\begin{equation} \label{eq:kf_filtered_state_process}
	\hat{x}_k = \hat{x}_{k\m} + L_k (y_k - C_k \hat{x}_{k\m}),
\end{equation}
\begin{equation} \label{eq:kf_prior_filtered_state_process}
	\hat{x}_{k\m} = A_{k - 1} \hat{x}_{k - 1} + B_{k - 1} u_{k - 1},
\end{equation}
where
\begin{equation}
	L_k = \tilde{P}_{k\m} C_k \t (C_k \tilde{P}_{k\m} C_k \t + D_k D_k \t) \inv
\end{equation}
is the Kalman gain, and the error covariances are given by
\begin{equation}
	\tilde{P}_k = (I - L_k C_k) \tilde{P}_{k\m} (I - L_k C_k) \t + L_k D_k D_k \t L_k \t,
\end{equation}
\begin{equation}
	\tilde{P}_{k\m} = A_{k - 1} \tilde{P}_{k - 1} A_{k - 1} \t + G_{k - 1} G_{k - 1} \t.
\end{equation}
We see that the estimation error covariance $\tilde{P}_{k}$ does not depend on the control. Using properties of conditional expectation, it is easy to show that
\begin{equation}
	\E (x_k \t Q_k x_k) = \tr \tilde{P}_k Q_k + \E( \hat{x}_k \t Q_k \hat{x}_k),
\end{equation}
and therefore the objective may be rewritten as
\begin{equation} \label{eq:objective_separation}
	J(u) = \sum_{k = 0}^{N - 1} \tr \tilde{P}_k Q_k + \hat{J}(u),
\end{equation}
where
\begin{equation} \label{eq:filtered_state_objective}
	\hat{J}(u) = \E \left( \sum_{k = 0}^{N - 1} \hat{x}_k \t Q_k \hat{x}_k + u_k \t R_k u_k \right).
\end{equation}
Since the estimation error covariance $\tilde{P}_k$ is determined by the Kalman filter and not by the control, optimizing over the objective $\hat{J}(u)$ is equivalent to optimizing over $J(u)$.
Furthermore, we can determine the distribution of the state as a function of the mean and covariance of the \textit{estimated} state process, that is,
\begin{equation} \label{eq:state_dist_equivalence}
	x_k \sim \Ncal (\bar{x}_k, P_k) \iff 
	\hat{x}_k \sim \Ncal (\bar{x}_k, P_k - \tilde{P}_k).
\end{equation}
It follows that, in order for the final state covariance to satisfy $0 < P_N \leq P_f$, the maximum final covariance $P_f$ must satisfy $P_f > \tilde{P}_N$.
Define now the \textit{innovation process} $(\tilde{y}_{k\m})$ by
\begin{align} \label{eq:innovation_process}
	\tilde{y}_{k\m} &= y_k - \E (y_k \vert \F_{k - 1}),
\end{align}
%
for $k = 0, 1, \dots, N$. Since
\begin{equation}
	\E (y_k \vert \F_{k - 1}) = \E (C_k x_k + D_k v_k \vert \F_{k - 1}) = C_k \hat{x}_{k\m},
\end{equation}
we obtain, by substituting the observation model (\ref{eq:observation_process}) in (\ref{eq:innovation_process}), that
\begin{equation} \label{eq:innovation_process_algebraic}
		\tilde{y}_{k\m}  = y_k - C_k \hat{x}_{k\m} = C_k \tilde{x}_{k\m} + D_k v_k.
\end{equation}
%
%
The state error $\tilde{x}_{k\m}$ depends linearly on $\tilde{x}_{0\m}$, $(w_i)_{i = 1}^{k - 1}$, and $(v_i)_{i = 1}^{k - 1}$, which are each independent of $v_k$.
It follows that $\tilde{x}_{k\m}$ and $v_k$ are independent, and therefore
%
%
we can compute the covariance of the innovation process as
\begin{equation} \label{eq:innovation_covariance}
	P_{\tilde{y}_{k\m}} = \E (\tilde{y}_{k\m} \tilde{y}_{k\m} \t ) = C_k \tilde{P}_{k\m} C_k \t + D_k D_k \t.
\end{equation}
Thus, the distribution of the innovation process is determined by the estimation error covariance $\tilde{P}_{k\m}$, and therefore may be computed prior to solving for the control inputs. We rewrite the estimated state process as
\begin{equation} \label{eq:xhat_process}
	\hat{x}_{k + 1} = A_k \hat{x}_k + B_k u_k + L_{k + 1} \tilde{y}_{(k + 1) \m},
\end{equation}
where $\hat{x}_0 = \hat{x}_{0\m} + L_0 \tilde{y}_{0\m}$. We have thus replaced the state process (\ref{eq:state_process}) with noise term $G_k w_k$ with a corresponding filtered state process with noise $L_{k + 1} \tilde{y}_{(k + 1) \m}$. The stochastic optimal control problem may now be posed entirely in terms of the filtered state process~(\ref{eq:xhat_process}).

\subsection{Block-Matrix Formulation}

The filtered state process (\ref{eq:xhat_process}) may be written in matrix notation as
\begin{multline} \label{eq:filtered_state_matrix_form}
	\begin{bmatrix}
		\hat{x}_0 \\ \hat{x}_1 \\ \hat{x}_2 \\ \vdots
	\end{bmatrix}
	= \begin{bmatrix}
		I \\ A_0 \\ A_1 A_0 \\ \vdots
	\end{bmatrix} \hat{x}_{0\m}
	+ \begin{bmatrix}
		0 & 0 &  \\
		B_0 & 0 &  \\
		A_1 B_0 & B_1 &  \\
		&& \ddots
	\end{bmatrix} \begin{bmatrix}
		u_0 \\ u_1 \\ \vdots
	\end{bmatrix} \\
+ \begin{bmatrix}
		L_0 & 0 & 0 &  \\
		A_0 L_0 & L_1 & 0 & \\
		A_1 A_0 L_0 & A_1 L_1 & L_2 \\
		&&& \ddots
	\end{bmatrix} \begin{bmatrix}
		\tilde{y}_{0\m} \\ \tilde{y}_{1\m} \\ \tilde{y}_{2\m} \\ \vdots
	\end{bmatrix}.
\end{multline}
Let $\hat{X}$ and $\tilde{Y}$ be column vectors constructed by stacking $\hat{x}_k$ and $\tilde{y}_{k\m}$ for $k = 0, 1, \dots, N$, and, similarly, let $U$ be the column vector constructed by stacking $u_k$ for $k = 0, 1, \dots, N - 1$.
Formally, we have that the column vector $\hat{X}$ is isomorphic to the sequence $(\hat{x}_k)$, which we denote by $(\hat{x}_k) \isomorph \hat{X}$ (similarly, $(u_k) \isomorph U$, $(\tilde{y}_{k\m}) \isomorph \tilde{Y}$). For appropriately constructed block matrices $A$, $B$, and $L$ as in (\ref{eq:filtered_state_matrix_form}), the filtered state process can be written as the linear matrix equation
\begin{equation} \label{eq:filtered_process_vector_def}
	\hat{X} = A \hat{x}_{0\m} + B U + L \tilde{Y}.
\end{equation}
See \cite{Okamoto2018b,Okamoto2018} for details on this construction. We may then rewrite the cost function (\ref{eq:filtered_state_objective}) in matrix form as
\begin{equation} \label{eq:objective_matrix_form}
	\hat{J}(U) = \E(\hat{X} \t Q \hat{X} + U \t R U),
\end{equation}
where $Q = \mathrm{blkdiag}(Q_0, \dots, Q_{N - 1}, 0) \geq 0$ and $R = \mathrm{blkdiag}(R_0, \dots, R_{N - 1}) > 0$. Let $E_k$ be a matrix defined so that $E_k \hat{X} = \hat{x}_k$, and denote the mean state by
$\bar{X} = \E(\hat{X}) \isomorph (\bar{x}_k)$.
We can then write the terminal state distribution constraints as
\begin{subequations} \label{eq:terminal_matrix_constraints}
	\begin{equation}
		E_N \bar{X} = \bar{x}_f,
	\end{equation}
	\begin{equation} \label{eq:problem2TerminalCovConst}
		E_N \E[(\hat{X} - \bar{X}) (\hat{X} - \bar{X} )\t] E_N \t \leq P_f - \tilde{P}_N.
	\end{equation}
\end{subequations}
Finally, in terms of the column vector $\tilde{X} \isomorph (\tilde{x}_k)$, the chance constraints (\ref{eq:chance_constraint}) may be written as
\begin{equation} \label{eq:chance_constraint_matrix_form}
	\Pr ( E_k(\hat{X} + \tilde{X}) \notin \chi ) \leq p_{\text{fail}}, \quad k = 0, 1, \dots, N.
\end{equation}
The distribution of $\hat{X} + \tilde{X}$ is determined, per (\ref{eq:state_dist_equivalence}), by the filtered process (\ref{eq:filtered_process_vector_def}) and the sequence $(\tilde{P}_k)$, and therefore the probability in (\ref{eq:chance_constraint_matrix_form}) depends solely, for fixed problem parameters, on the control sequence $U$.
In summary, we have reformulated the original stochastic optimal control problem (\ref{prob:original}) in terms of the inaccessible state into the following problem in terms of the accessible filtered state.

\begin{problem} \label{prob:matrix_filtered_problem_affine}
	Find the admissible control sequence $U^* \isomorph (u^*_k)$ that minimizes the objective (\ref{eq:objective_matrix_form}) subject to the terminal constraints (\ref{eq:terminal_matrix_constraints}), for $P_f > \tilde{P}_N$, and chance constraints (\ref{eq:chance_constraint_matrix_form}).
\end{problem}

\section{CONTROL OF THE FILTERED STATE}\label{sec:ControlOfFilteredState}

We consider filtered state history feedback of the form
\begin{equation} \label{eq:control_structure}
	u_k = \sum_{i = 0}^{k } K_{k,i} (\hat{x}_i - \bar{x}_i) + m_k, 
\end{equation}
for $k = 0, 1, \dots, N - 1$, where $K_{k, i} \in \Re ^{n_u \times n_x}$ are feedback gains and $m_k \in \Re ^{n_u}$ are feedforward controls.
Problem \ref{prob:matrix_filtered_problem_affine} may thus be solved by identifying the sequences $(K_{k, i})$ and $(m_k)$.
However, before attempting to solve Problem \ref{prob:matrix_filtered_problem_affine}, we consider the following conservative relaxation of the chance constraints~(\ref{eq:chance_constraint}) given by the condition
\begin{equation} \label{eq:relaxed_chance_constraint}
	\sum_{j = 1}^{N_\chi} p_j \leq p_{\text{fail}}, \quad \Pr(\alpha_j \t x_k > \beta_j) \leq p_j, \;\; j = 1,\ldots,N_\chi.
\end{equation}
This constraint represents a decomposition of (\ref{eq:chance_constraint}) into independent half-plane constraints, which we expect to be a conservative approximation due to the subadditivity of probabilities.
Indeed, it has been shown in \cite{blackmore2009convex} that if (\ref{eq:relaxed_chance_constraint}) holds, then the chance constraint~(\ref{eq:chance_constraint}) is satisfied.

\begin{theorem} \label{thm:convex}
Given $p_j$ as in the  chance constraint relaxation (\ref{eq:relaxed_chance_constraint}),
Problem \ref{prob:matrix_filtered_problem_affine}  is convex.
\end{theorem}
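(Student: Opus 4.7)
The plan is to reparameterize the control policy (\ref{eq:control_structure}) in terms of the filter innovation sequence, under which the filtered-state covariance becomes an affine function of the feedback gains rather than a rational function, and then to verify that each constraint and the objective in Problem~\ref{prob:matrix_filtered_problem_affine} are convex in the new variables. Concretely, I would collect the independent zero-mean Gaussian driving signals of the filtered-state process (\ref{eq:xhat_process}) into a single stacked noise vector $V \isomorph (\hat{x}_{0\m} - \bar{x}_0, \tilde{y}_{0\m}, \ldots, \tilde{y}_{N\m})$, whose joint covariance $\Sigma_V$ is determined entirely by the Kalman filter and is therefore independent of the decision variables. By iterating (\ref{eq:xhat_process}), every deviation $\hat{x}_k - \bar{x}_k$ is a causal linear combination of entries of $V$ and of $(u_i - m_i)_{i<k}$. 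Substituting this into (\ref{eq:control_structure}) and solving the resulting causal linear system shows that the policy is equivalent to one of the form $U = K V + M$ with $K$ block lower-triangular and $M$ deterministic, and that the map $(K_{k,i}, m_k) \leftrightarrow (K,M)$ is bijective on the class of admissible causal affine policies, so one may optimize over $(K, M)$ without loss of generality.

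Under this substitution, $\bar{X} = A \bar{x}_0 + B M$ is affine in $M$, and $\hat{X} - \bar{X}$ is an affine function of $K$ applied to the zero-mean Gaussian $V$, so the filtered-state covariance factors as $\Sigma_{\hat{X}} = Z \Sigma_V Z\t$ with $Z$ affine in $K$. The mean terminal constraint $E_N \bar{X} = \bar{x}_f$ is then linear in $M$; the covariance terminal constraint (\ref{eq:problem2TerminalCovConst}) becomes $E_N Z \Sigma_V Z\t E_N\t \leq P_f - \tilde{P}_N$, which a Schur complement applied to $E_N Z \Sigma_V^{1/2}$ turns into a linear matrix inequality in $K$; and the objective (\ref{eq:objective_matrix_form}) expands as $\bar{X}\t Q \bar{X} + M\t R M + \tr(Q \Sigma_{\hat{X}}) + \tr(R K \Sigma_V K\t)$, a sum of a convex quadratic in $M$ and two squared Frobenius norms of matrices affine in $(M, K)$.

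For the relaxed chance constraints, I would use that $x_k = \hat{x}_k + \tilde{x}_k$ is Gaussian with mean $E_k \bar{X}$ and covariance $E_k \Sigma_{\hat{X}} E_k\t + \tilde{P}_k$ to reduce each halfspace bound in (\ref{eq:relaxed_chance_constraint}) to the deterministic inequality
\begin{equation*}
\alpha_j\t E_k \bar{X} + \Phi\inv(1 - p_j) \sqrt{ \alpha_j\t E_k \Sigma_{\hat{X}} E_k\t \alpha_j + \alpha_j\t \tilde{P}_k \alpha_j } \leq \beta_j,
\end{equation*}
where $\Phi$ denotes the standard-normal CDF. Because $p_j \leq p_{\text{fail}} < 1/2$, the coefficient $\Phi\inv(1 - p_j)$ is strictly positive, so the argument of the square root equals the squared Euclidean norm of a vector affine in $K$ (obtained by stacking $\Sigma_V^{1/2} Z\t E_k\t \alpha_j$ with the constant $\tilde{P}_k^{1/2} \alpha_j$), and the whole inequality is a second-order cone constraint affine in $(M, K)$.

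The main obstacle I expect is the reparameterization in the first step: one must justify carefully that the state-history feedback class (\ref{eq:control_structure}) and the innovation-feedback class $U = K V + M$ generate the same admissible joint distributions of $(\hat{X}, U)$, so that convexity in the latter implies convexity of Problem~\ref{prob:matrix_filtered_problem_affine} as originally posed. This relies on the causal invertibility of the filtered dynamics driven by independent innovations, which is routine in principle but bookkeeping-heavy in the block-matrix form of (\ref{eq:filtered_state_matrix_form}); the remaining steps then amount to applications of Schur complements and Gaussian tail bounds that are standard in chance-constrained covariance steering.
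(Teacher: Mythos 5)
Your proposal is correct and produces the same convex program as the paper, but it reaches the affine reparameterization by a different (equally standard) route. The paper retains the filtered-state-history feedback $u_k = \sum_i K_{k,i}(\hat{x}_i - \bar{x}_i) + m_k$, derives $\hat{X} - \bar{X} = (I - BK)\inv [A(\hat{x}_{0\m} - \bar{x}_0) + L\tilde{Y}]$, and then applies the Skaf--Boyd change of variables $F = K(I - BK)\inv$, which is an explicit algebraic bijection between block-lower-triangular $K$ and $F$ (with inverse $K = F(I + BF)\inv$), so that the covariance becomes $(I+BF)S(I+BF)\t$ with $S$ fixed. You instead parameterize the policy directly as causal affine feedback $U = KV + M$ on the stacked exogenous noise $V$; in the paper's notation your gain is exactly $F\,[A \;\; L]$. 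What each buys: the paper's substitution settles the equivalence of policy classes for free via the closed-form bijection, at the cost of introducing and inverting $I - BK$; your route makes the affine dependence of the covariance on the decision variable immediate and matches the admissible class of Problem~\ref{prob:matrix_filtered_problem_affine} (measurement-affine controls) exactly, but the bijection with the state-history class (\ref{eq:control_structure}) that you flag as the main obstacle can in fact fail in degenerate cases (e.g., if some Kalman gain $L_k$ annihilates innovation directions, those innovations cannot be recovered from the filtered-state history) --- this does not invalidate your argument, since your class is then strictly larger and still admissible, but the claimed bijectivity should be weakened to an inclusion. The remaining steps --- terminal covariance as an LMI (the paper uses the equivalent norm inequality (\ref{eq:convex_final_covariance})), chance constraints as second-order cones with positive coefficient $\ncdf\inv(1-p_j)$ because $p_j < 1/2$, and the convex quadratic objective --- coincide with the paper's.
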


\begin{proof}
	Let $M \isomorph (m_k)$ be column a vector defined as $U$, and let
	\begin{equation}
		K = \begin{bmatrix}
			K_{0,0} & 0 & 0 & \cdots & 0 \\
			K_{1,0} & K_{1,1} & \ddots & \cdots & 0  \\
			\vdots & \vdots & \ddots & 0 & 0 \\
			K_{N - 1, 0} & K_{N - 1, 1} & \cdots & K_{N - 1, N - 1} & 0
		\end{bmatrix}.
	\end{equation}
	We may then write the control process as the matrix equation
	\begin{equation}
		U = K (\hat{X} - \bar{X}) + M.
	\end{equation}
	The filtered state process (\ref{eq:filtered_process_vector_def}) is thus given by
	\begin{equation}
		\hat{X} = A \hat{x}_{0\m} + B K (\hat{X} - \bar{X}) + B M + L \tilde{Y}.
	\end{equation}
	Since $\tilde{Y}$ has zero mean, it follows that
	\begin{equation}
		\bar{X} = \E ( \hat{X} ) = A \bar{x}_0 + B M,
	\end{equation}
	and thus the terminal constraint $\E(x_N) = \bar{x}_f$ is written as
	\begin{equation} \label{eq:convex_final_mean}
		E_N \bar{X} = E_N (A \bar{x}_0 + B M) = \bar{x}_f,
	\end{equation}
	which is affine in $M$, and hence convex.
	Since $U - \E(U) = K(\hat{X} - \bar{X})$, we have
	\begin{equation}
		\hat{X} - \bar{X} = A (\hat{x}_{0\m} - \bar{x}_0) + B K (\hat{X} - \bar{X}) + L \tilde{Y},
	\end{equation}
	which, after solving for $\hat{X} - \bar{X}$, we rewrite as
	\begin{equation} \label{eq:state_deviation_with_inverse}
		\hat{X} - \bar{X} = (I - BK) \inv [ A (\hat{x}_{0\m} - \bar{x}_0) + L \tilde{Y} ].
	\end{equation}
	Since $K$ is block lower-triangular and $B$ is strictly block lower-triangular, the matrix $I - BK$ is invertible.
	Following~\cite{skaf2010design}, we define the new decision variable $F$ as
	\begin{equation}
		F = K (I - BK) \inv \in \Re ^{N n_u \times (N + 1) n_x}.
	\end{equation}
	It follows that $F$ is block lower-triangular and satisfies
	\begin{equation} \label{eq:K_F_equality}
		I + B F = (I - B K) \inv.
	\end{equation}
	Furthermore, $K$ is a function of $F$ given by
	\begin{equation}
		K = F (I + BF) \inv,
	\end{equation}
	and therefore we may optimize over $F$ in place of $K$ \cite{skaf2010design}.
	Substituting (\ref{eq:K_F_equality}) into (\ref{eq:state_deviation_with_inverse}), we obtain
	\begin{equation} \label{eq:state_deviation_wrt_F}
		\hat{X} - \bar{X} = (I + BF) [ A (\hat{x}_{0\m} - \bar{x}_0) + L \tilde{Y} ].
	\end{equation}
	By assumption, $\hat{x}_{0\m}$ is independent from both $\tilde{x}_{0\m}$ and $v_0$, and therefore by (\ref{eq:innovation_process_algebraic}) we have that $\hat{x}_{0\m}$ is independent from $\tilde{Y}$.
	It follows that the bracketed term in the right-hand side of (\ref{eq:state_deviation_wrt_F}) has covariance
	\begin{equation}
		S = \mathrm{Cov}[ A (\hat{x}_{0\m} - \bar{x}_0) + L \tilde{Y} ] = A \hat{P}_{0\m} A \t + L P_{\tilde{Y}} L \t,
	\end{equation}
	where, since steps of $(\tilde{y}_{k\m})$ are independent \cite{Astrom1970}, the covariance of $\tilde{Y}$ is the block-diagonal matrix
	\begin{equation}
		P_{\tilde{Y}} = \E (\tilde{Y} \tilde{Y} \t) = \mathrm{blkdiag}(P_{\tilde{y}_{0\m}}, \dots, P_{\tilde{y}_{N\m}} ),
	\end{equation}
	where $P_{\tilde{y}_{k\m}}$ as in (\ref{eq:innovation_covariance}). The covariance of the filtered process is thus
	\begin{equation} \label{eq:block_filtered_covariance}
		\hat{P} = \E[(\hat{X} - \bar{X}) (\hat{X} - \bar{X}) \t ]= (I + BF) S (I + BF) \t,
	\end{equation}
	and the covariance of the control is given by
	\begin{equation}
		P_U = \E[(U - \E(U)) (U - \E(U)) \t ] = F S F\t.
	\end{equation}
	We can then rewrite the objective (\ref{eq:objective_matrix_form}) as
	\begin{multline} \label{eq:convex_objective}
		\hat{J}(F, M) = (A \bar{x}_0 + B M) \t Q (A \bar{x}_0 + B M) + M \t R M \\
	+ \tr \{ [(I + BF) \t Q (I + BF) + F \t R F] S \},
	\end{multline}
	which is convex in $F$ and $M$, because $Q \geq 0$ and $R > 0$. 
	The terminal covariance constraint~(\ref{eq:problem2TerminalCovConst}) may be written as 
	\begin{equation}
		E_N (I + BF) S (I + BF) \t  E_N \t \leq P_f -  \tilde{P}_N,
	\end{equation}
	or, equivalently~\cite{Okamoto2018},
	\begin{equation} \label{eq:convex_final_covariance}
		\Vert S ^{1/2} (I + BF) \t  E_N \t (P_f -  \tilde{P}_N) ^{-1/2} \Vert - 1 \leq 0,
	\end{equation}
	where $S^{1/2}$ denotes a matrix satisfying $S = (S^{1/2}) \t S^{1/2}$.
	The matrix $(P_f - \tilde{P}_N)^{-1/2}$ exists since $P_f > \tilde{P}_N$.
	Next, we consider the conservative chance constraints (\ref{eq:relaxed_chance_constraint}).
	The probabilistic half-plane constraint in (\ref{eq:relaxed_chance_constraint}) has been shown in \cite{Okamoto2018b} to be equivalent to the constraint
	\begin{equation}
		\ncdf \inv ( 1 - p_j) \Vert P_k ^{1/2} \alpha_j \Vert + \alpha_j \t \bar{x}_k - \beta_j \leq 0,
	\end{equation}
	where $\ncdf\inv$ is the inverse of the cumulative normal distribution function and $P_k$ is the state covariance at time step $k$, which we can write as
	\begin{equation}
		P_k = E_k \hat{P} E_k \t +  \tilde{P}_k.
	\end{equation}
	In addition, $P_k^{1/2}$ satisfies $(P_k ^{1/2}) \t P_k ^{1/2} = P_k$ and is obtained by 
	\begin{equation}
		P_k ^{1/2} = \begin{bmatrix}
		S ^{1/2} (I + BF) \t E_k \t \\ \tilde{P}_k ^{1/2}
	\end{bmatrix}.
	\end{equation}
	Notice that, because each $p_j < 0.5$, it follows that $\ncdf \inv (1 - p_j) > 0$. 
	 Finally, substituting into the chance constraint, we obtain the second order cone constraint
	\begin{multline} \label{eq:convex_chance_constraint}
		\ncdf \inv ( 1 - p_j) \left\Vert \begin{bmatrix}
				S ^{1/2} (I + BF) \t E_k \t \\ \tilde{P}_k ^{1/2}
			\end{bmatrix} \alpha_j \right\Vert \\
			+ \alpha_j \t E_k (A \bar{x}_0 + BM) - \beta_j \leq 0, \quad j = 1, \dots, N_\chi.
	\end{multline}	
Given some constants $p_j$,  it follows that  the problem of minimizing the objective (\ref{eq:convex_objective}) with respect to the optimization parameters $F$ and $M$, 
 subject to the constraints (\ref{eq:convex_final_mean}), (\ref{eq:convex_final_covariance}), (\ref{eq:convex_chance_constraint}) is convex. 
\end{proof}

We remark that the convex constraints (\ref{eq:convex_final_covariance}) and (\ref{eq:convex_chance_constraint}) depend on the estimation error covariance $\tilde{P}_k$, which, in turn, depend on the measurement model (\ref{eq:observation_process}) and the filter design.
Therefore, in contrast to separation-based control \cite{Tse1971}, the optimal control that solves Problem~\ref{prob:matrix_filtered_problem_affine} cannot be found independently of the optimal filter, provided that the constraints (\ref{eq:convex_final_covariance}) and (\ref{eq:convex_chance_constraint}) are active.
This result is intuitive: If the state estimate is highly uncertain, then additional control effort may be required to steer further away from an obstacle.
Furthermore, the certainty equivalence property \cite{BrysonHo} does not hold when the constraints (\ref{eq:convex_final_covariance}) and (\ref{eq:convex_chance_constraint}) are active, since the intensity of the process noise is represented in the \mbox{matrix $S$}.

The controller (\ref{eq:control_structure}) uses feedback of both the current and the past values of the filtered state process at each step.
It follows that the computational complexity of the convex formulation of Problem \ref{prob:matrix_filtered_problem_affine} as given in Theorem \ref{thm:convex} scales with $\mathcal{O}(N^2 n_x n_u)$.
For problems with a large time horizon, one may restrict the matrix $F$ to be block diagonal; the resulting computational complexity scales by $\mathcal{O}(N n_x n_u)$ \cite{Okamoto2018}.
More generally, the matrix $F$ can be set to be block banded, which allows the designer to trade controller performance with computational complexity \cite{skaf2010design}.

\begin{figure}[]
	\includegraphics[trim={0.0in 0.05in 0.0in 0.05in}, clip]{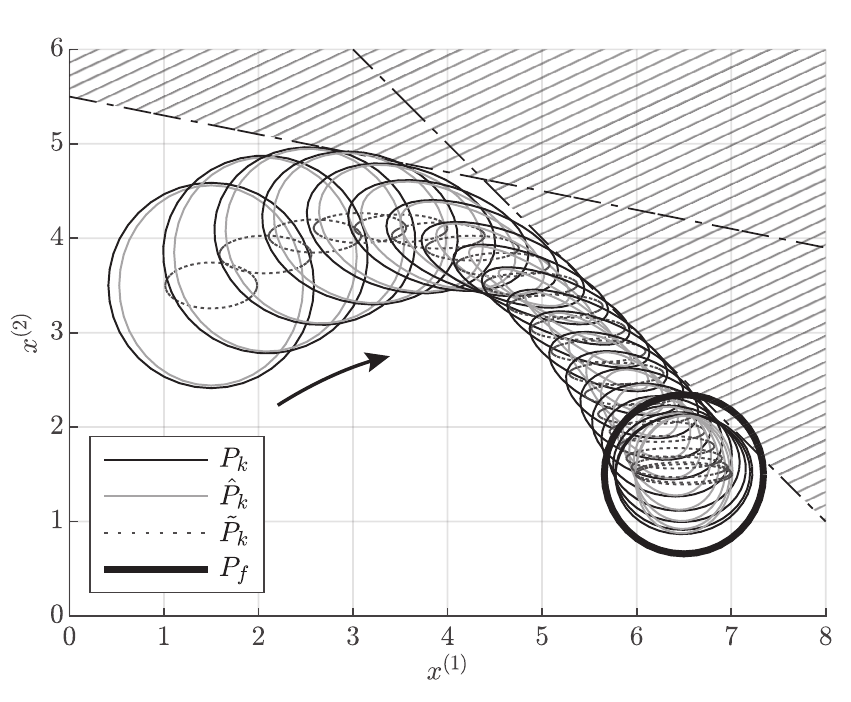}
	\includegraphics[trim={0.0in 0.08in 0.0in 0.2in}, clip]{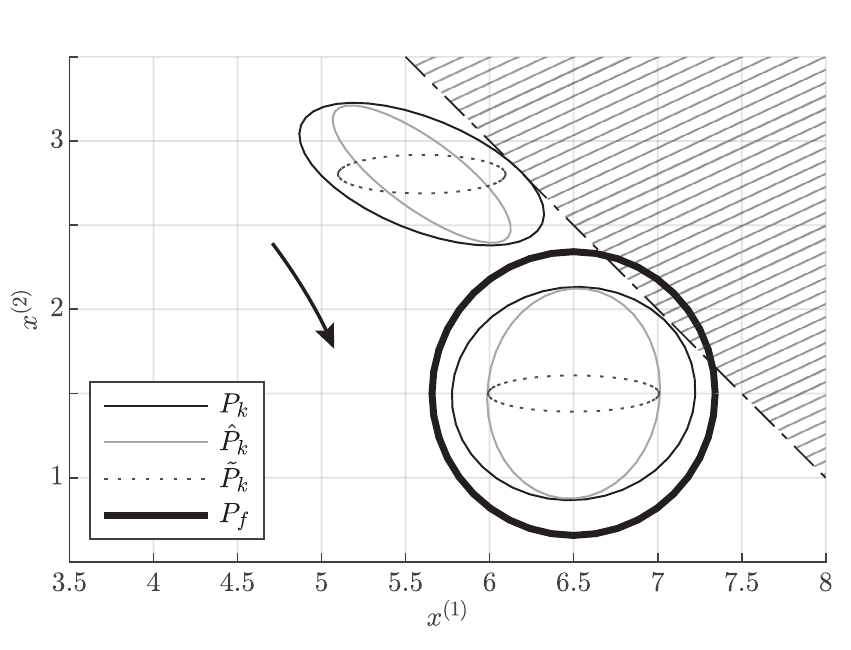}
	\caption{(Top:) 3$\sigma$ covariance ellipses drawn at each step; (Bottom:) detail of 3$\sigma$ covariance ellipses for steps $k = 11$ and $k = N = 20$ are shown. 
	The arrows indicate the direction of motion and the compliment of $\chi$ is marked by diagonal lines. 
	The variables $x_k^{(1)}$ and  $x_k^{(2)}$ denote the first two coordinates of the state. \label{fig:position_covariance}}
\end{figure}

\section{NUMERICAL EXAMPLE}\label{sec:NumericalExample}

Consider for $\Delta t = 0.2$ the following double integrator system with the horizon $N = 20$ given by, for all $k$,
\begin{equation}
	A_k = \begin{bmatrix}
		1 & 0 & \Delta t & 0 \\
		0 & 1 & 0 & \Delta t \\
		0 & 0 & 1 & 0 \\
		0 & 0 & 0 & 1
	\end{bmatrix}, \; B_k = \begin{bmatrix}
		 \Delta t^2 / 2 & 0 \\
		 0 & \Delta t^2 / 2 \\
		 \Delta t & 0 \\
		 0 & \Delta t
	\end{bmatrix}, 
\end{equation}
and $G_k = 0.01 \times I_3$, $C_k = \begin{bmatrix}
		0_{3\times 1} & I_3
	\end{bmatrix}$, $D_k = \diag(0.1, 0.003, 0.003)$.
and $G_k = 0.01 \times I_3$.
The initial state distribution is described by
$\tilde{P}_{0\m} = \mathrm{diag}(2, 1, 1.4, 1.4) \times 10^{-2}$, $\hat{P}_{0\m} = \mathrm{diag}(8, 9, 0.6, 0.6) \times 10^{-2}$, $\bar{x}_0 = [1.5, 3.5, 3, 2]\t$,
and the target state distribution is constrained to have mean
$\bar{x}_f = [6.5, 1.5, 0, 0]\t$ and maximum covariance $P_f = \mathrm{diag}(6, 6, 0.6, 0.6) \times 10^{-2}$.
Finally, the region $\chi$ is defined for $N_\chi = 2$ half plane constraints as in (\ref{eq:chance_constrained_region}) given by
$\alpha_1 = [1, 5, 0, 0]\t$,
$\alpha_2 =[1, 1, 0, 0]\t$,
with $\beta_1 = 27.5$, $\beta_2 = 9$, $p_1 = p_2 = 5 \times 10^{-4}$, and $p_{\text{fail}} = p_1 + p_2$.
We solved the convex optimization problem using YALMIP \cite{Lofberg2004yalmip} with MOSEK \cite{Mosek2015}.
The resulting trajectory of the distribution of the position coordinates are shown in \mbox{Figure \ref{fig:position_covariance}}.
%
%
In this example it is clear that the resulting control depends on the observation model.
Since the first position coordinate is not directly measured, there is a larger uncertainty in the estimated value of the first position coordinate compared to the second position coordinate.
The controller compensates accordingly by using sufficient control effort along the first position coordinate so that the chance constraints are satisfied.
We can see this by observing the shape of the $3 \sigma$ ellipse of the filtered state covariance $\hat{P}_k$ in the bottom plot of \mbox{Figure \ref{fig:position_covariance}}.

\section{CONCLUSIONS}\label{sec:Conclusion}

In this paper, we have developed a covariance steering control policy for discrete-time linear stochastic systems with Gaussian process and measurement noise.
The filtered state was obtained by a Kalman filter, and then, in terms of the filtered state, the covariance steering problem was posed as a deterministic convex optimization problem.
It was observed that, due to the covariance-based constraints on the state distribution, the resulting optimal control depends on both the process noise and measurement model. Finally, the developed theory was demonstrated using a numerical example.

In future work, the proposed approach can be extended to vehicle path planning problems under Gaussian disturbance and measurement uncertainty~\cite{patil2012estimating,agha2014firm}. In particular, the proposed approach can be extended to handle non-convex feasible regions as in~\cite{Okamoto2018b}, where the authors converted the original stochastic vehicle path planning problem to a deterministic mixed integer  convex programming problem.

\section*{ACKNOWLEDGMENT}

The work of the first author was supported by a NASA Space Technology Research Fellowship.
The work of the second and third authors was supported by NSF award CPS-1544814, by ARL under DCIST CRA W911NF-17-2-
0181, and by ONR award N00014-18-1-2828. 
The authors would also like to thank Dipankar Maity for many helpful discussions and suggestions.


\bibliography{cdc20}
\bibliographystyle{ieeetr}

\end{document}